\theoremstyle{plain}
\newtheorem{thm}{Theorem}[section]
\newtheorem{prop}[thm]{Proposition}
\newtheorem{remark}{Remark}  
\newtheorem{corollary}[thm]{Corollary}
\renewcommand{\div}{\operatorname{div}}
\newcommand{\D}{\displaystyle}
\newcommand{\ip}[2]{\ensuremath{\langle #1 , #2 \rangle}}
\newcommand{\tp}{\texttt{p}}
\newcommand{\spn}{\operatorname{span}}
\newcommand{\no}{\nabla_0}
\renewcommand{\cap}{\operatorname{cap}}
\newcommand{\eucl}{\operatorname{eucl}}
\newcommand{\css}{\operatorname{\mathcal{C}^2_{\textmd{sub}}}}
\newcommand{\cs}{\operatorname{\mathcal{C}^1_{\textmd{sub}}}}
\theoremstyle{definition}
\newtheorem{definition}{Definition}
\theoremstyle{remark}
\numberwithin{equation}{section}
\begin{document}
\title[Fundamental Solution in H\"{o}rmander Vector Fields]{The Fundamental Solution to the $\tp$-Laplacian in a class of H\"{o}rmander Vector Fields}
\author{Thomas Bieske}
\address{ Department of Mathematics and Statistics,
University of South Florida, Tampa, FL 33620, USA}
\email{tbieske@mail.usf.edu}
\thanks{This paper is part of the second author's Ph.D. thesis under the direction of the first author.}
\subjclass[2010]{Primary: 35R03, 35A08, 35C05 Secondary: 53C17, 31C45, 31E05}
\author{Robert D. Freeman}
\address{ Department of Mathematics and Statistics,
University of South Florida, Tampa, FL 33620, USA}
\email{rfreeman1@mail.usf.edu}

\begin{abstract} 
We find the fundamental solution to the $p$-Laplace equation in a class of H\"{o}rmander vector fields that generate neither a Carnot group nor a Grushin-type space.  The singularity occurs at the sub-Riemannian points, which naturally 
corresponds to finding the fundamental solution of a generalized operator in Euclidean space. We then use this solution to find an infinite harmonic function with specific boundary data and to compute the capacity of annuli centered at the singularity. 
\end{abstract}
\maketitle
\section{Motivation}
The $\texttt{p}$-Laplace equation is the model equation for nonlinear potential theory.  The Euclidean results of \cite{HKM:NPT} can be extended into a class of sub-Riemannian spaces possessing an algebraic group law, called Carnot groups \cite{HH:QR}. Fundamental solutions to the $p$-Laplace equation in a subclass of Carnot groups called groups of Heisenberg-type have been found in \cite{CDG, HH:QR}. The exploration of the $p$-Laplace equation in sub-Riemannian spaces without an algebraic group law is currently a topic of interest. In H\"{o}rmander vector fields without an algebraic group law, fundamental solutions to the $\texttt{p}$-Laplace equation have been found in a subclass called Grushin-type spaces. \cite{BG, B, WB} In this paper, we find the fundamental solution to the $\tp$-Laplace equation for $1< \tp < \infty$ in a class of H\"{o}rmander vector fields that are not Grushin-type. The singularity occurs at the sub-Riemannian points, which naturally corresponds to finding the fundamental solution of a generalized operator in Euclidean space. To our knowledge, this is the first instance of a closed-form fundamental solution to the $\texttt{p}$-Laplace equation outside of Carnot groups and Grushin-type spaces. 
\section{The Environment}
Consider $\mathbb{R}^{2n+1}$ and fix a point $x_0=(a_1,a_2,\ldots, a_{2n}, s)\in\mathbb{R}^{2n+1}$ and numbers $0\neq c \in\mathbb{R}$ and $k\in\mathbb{R}^+$.
We begin with the following vector fields:

\begin{eqnarray}\label{vectorfields}
X_i =\left\{\begin{array}{cc} 
\frac{\D\partial}{\D\partial{x_i}}+ 2kc(x_{i+n} -a_{i+n})\bigg(\D\sum_{l=1}^{2n}(x_l - a_l)^2\bigg)^{k-1}\frac{\D\partial}{\D\partial{t}}, & 1\leq i \leq n, \nonumber\\
\mbox{}\nonumber\\
 \frac{\D\partial}{\D\partial{x_i}} - 2kc(x_{i-n} -a_{i-n})\bigg(\D\sum_{l=1}^{2n}(x_l - a_l)^2\bigg)^{k-1}\frac{\D\partial}{\D\partial{t}}, & n+1 \leq i \leq 2n,
 \end{array}\right.
\end{eqnarray}
The Lie bracket $[X_i,X_j]$ for $i<j$, equals 
\begin{multline*}
8kc(k-1)\Big(\D\sum_{l=1}^{2n}(x_l - a_l)^2\Big)^{k-2}\bigg((x_{j+n}-a_{j+n})(x_i-a_i)-(x_{i+n}-a_{i+n})(x_j-a_j)\bigg)\frac{\partial}{\partial{t}} \textmd{\ \ for\ } i,j\leq n \\
8kc(k-1)\Big(\D\sum_{l=1}^{2n}(x_l - a_l)^2\Big)^{k-2}\bigg((x_{i-n}-a_{i-n})(x_j-a_j)-(x_{j-n}-a_{j-n})(x_i-a_i)\bigg)\frac{\partial}{\partial{t}} \textmd{\ \ for\ } i,j>n \\
8kc(k-1)\Big(\D\sum_{l=1}^{2n}(x_l - a_l)^2\Big)^{k-2}\bigg((x_{i+n}-a_{i+n})(x_j-a_j)-(x_{j-n}-a_{j-n})(x_i-a_i)\bigg)\frac{\partial}{\partial{t}}  \\
\mbox{}-4kc\Big(\D\sum_{l=1}^{2n}(x_l - a_l)^2\Big)^{k-1}\delta_{i,j-n}\frac{\partial}{\partial{t}}\textmd{\ \ for\ } i \leq n<j.
\end{multline*}
We note that this computation shows that these vector fields are not the tangent space of a Carnot group when $k\neq 1$, as the brackets all vanish at some points, such as $x_0$, but not at others. In addition, the vector fields do not possess the Grushin-type space form (see, for example, \cite{B, BG}). 

We endow $\mathbb{R}^{2n+1}$ with an inner product  so that the collection $\{X_i\}_{i=1}^{2n}\cup \{\frac{\partial}{\partial t}\}$ forms an 
orthonormal basis, producing a sub-Riemannian manifold that we 
shall call $g_n$ that also forms the tangent space to a sub-Riemannian space, denoted $G_n$. Points in $G_n$ will also be denoted by $x=(x_1,x_2,\ldots, x_{2n}, t)$.

Though $G_n$ is not a Carnot group, it is a metric space with the natural 
metric being the Carnot-Carath\'{e}odory distance, which is defined for 
points $x$ and $y$ as follows:
\begin{equation*}
d_C(x,y)= \inf_{\Gamma} \int_{0}^{1} \| \gamma '(t) \|\; dt. 
\end{equation*}
\noindent Here $ \Gamma $ is the set of all curves $ \gamma $ such 
that $ \gamma (0) = x, \gamma (1) = y $ and 
$$
\gamma '(t) \in \spn \{\{X_i(\gamma(t))\}_{i=1}^{2n}\} .
$$
Using this metric, we can define a Carnot-Carath\'{e}odory 
ball of radius $r$ centered at a point $x_0$ by
$$
B_C(x_0,r)=\{x\in G_n : d_C(x,x_0) < r\}
$$ 
and similarly, we shall denote a bounded domain in $G_n$ by $\Omega$. Given a smooth function $f$ on $G_n$, we define the horizontal gradient 
of $f$ as 
$$
\nabla_0f(x) = (X_1f(x),X_2f(x),\ldots, X_{2n}f(x))
$$
and the symmetrized second order (horizontal)  derivative matrix by 
$$
((D^2f(x))^{\star})_{ij} = \frac{1}{2} (X_iX_jf(x)+X_jX_if(x))
$$ 
for $i,j=1,2,\ldots n$.  

\begin{definition}
The function $f: G_n \to \mathbb{R}$ is said to be $\cs$ if $X_if$ is 
continuous for all $i=1,2,\ldots, 2n$.  Similarly, the function $f$ is $\css$ 
if $X_iX_jf(x)$ is continuous for all $i,j=1,2,\ldots, 2n$.
\end{definition} 

Using these derivatives, we consider two main operators on $\css$ functions 
called the $\tp$-Laplacian 
$$
\Delta_{\tp}f=\div(\|\nabla_0f\|^{\tp-2}\nabla_0f)=\sum_{i=1}^{2n}X_i
(\|\nabla_0f\|^{\tp-2}X_if)
$$ 
defined for $1 < \tp < \infty$ and the infinite Laplacian
\begin{eqnarray*}
\Delta_{\infty}f
   =  \sum_{i,j=1}^{2n} X_ifX_jfX_iX_jf 
   =   \ip{\nabla_0 f}{(D^2f)^{\star}\nabla_0 f}=\frac{1}{2}\nabla_0f \cdot \nabla_0\|\nabla_0f\|^2.    
\end{eqnarray*}

We may define Sobolev spaces in the natural way.  Namely, for any open 
set $\mathcal{O} \subset G_n$, the function $f$ is in the horizontal 
Sobolev space $W^{1,q}(\mathcal{O})$ if the functions $f,\: X_1f, \ldots, 
X_{2n}f$ lie in $ L^q(\mathcal{O}) $.  Replacing $ L^q (\mathcal{O})$ by 
$ L_{loc}^q(\mathcal{O})$, the space $ W_{loc}^{1,q}(\mathcal{O}) $ is 
defined similarly.  We may then use these Sobolev spaces to consider the 
above operators in the usual weak sense. 

\section{The Co-Area Formula and Measure Theory}

Let $\Omega \subset G_n$ be a bounded domain, and let $\psi \in \cs(\Omega)$ be a 
smooth, real-valued function which extends continuously to $\partial 
\Omega$.  For convenience, we write $\nabla$ for the Euclidean gradient 
on $G_n = \mathbb{R}^{2n+1}$.
In place of Fubini's Theorem for iterated integrals, we will make use of 
the following Co-Area Formula in the sub-Riemannian case via Theorem 4.2 in \cite{MSC:co}. 

\begin{thm}
Under the hypotheses as above, then for any function $g \in L^1(\Omega)$
\begin{equation} \label{coarea}
\iint_\Omega g \|\nabla \psi\| \, d\mathcal{L}_{2n+1} \; = \; 
\int_0^\infty \int_{\psi^{-1}\{r\}} g \, d\mathcal{H} dr,
\end{equation}
where $d\mathcal{L}_{2n+1}$ denotes Lebesgue $(2n+1)$-measure on $\Omega$, and $d\mathcal{H}$ denotes 
Hausdorff $(2n)$-measure on $\psi^{-1}(\{r\})$.
\end{thm}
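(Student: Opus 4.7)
The plan is to derive \eqref{coarea} as a specialization of the classical Euclidean coarea formula of Federer, combined with the cited \cite[Theorem 4.2]{MSC:co} to reconcile the surface measures involved. Since $\psi$ is assumed smooth on the bounded domain $\Omega$ and extends continuously to $\overline{\Omega}$, its Euclidean gradient $\nabla\psi$ is continuous and bounded there, so $\psi$ is Lipschitz. Federer's theorem then yields, for any $g\in L^1(\Omega)$,
\begin{equation*}
\iint_\Omega g\,\|\nabla\psi\|\,d\mathcal{L}_{2n+1}\;=\;\int_{-\infty}^{\infty}\int_{\psi^{-1}\{r\}} g\,d\mathcal{H}^{2n}_{\eucl}\,dr,
\end{equation*}
where $\mathcal{H}^{2n}_{\eucl}$ denotes the Euclidean $(2n)$-Hausdorff measure. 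The range of integration may be taken over $[0,\infty)$ once $\psi$ is shifted, as is implicit in the statement.

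Next I would use Sard's theorem to discard the null set of critical values of $\psi$; for each regular value $r$, the level set $\psi^{-1}\{r\}\cap\Omega$ is a smooth embedded hypersurface of Euclidean dimension $2n$, on which the Euclidean Hausdorff measure coincides with the usual surface area. To pass from $\mathcal{H}^{2n}_{\eucl}$ to the sub-Riemannian $\mathcal{H}$ appearing in \eqref{coarea}, I would invoke \cite[Theorem 4.2]{MSC:co}. The key structural remark is that each $X_i=\partial_{x_i}+b_i\partial_t$ differs from $\partial_{x_i}$ only by a multiple of $\partial_t$, so the change-of-basis from the standard Euclidean frame $\{\partial_{x_1},\ldots,\partial_{x_{2n}},\partial_t\}$ to $\{X_1,\ldots,X_{2n},\partial_t\}$ is unipotent, hence of determinant one. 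Thus the two volume forms on $\mathbb{R}^{2n+1}$ agree, and the same equality transfers to the induced surface measures on any smooth hypersurface.

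The main obstacle is precisely this measure-identification step: the left-hand side of \eqref{coarea} is computed with the Euclidean gradient while the right-hand side uses the sub-Riemannian Hausdorff measure. Once the unipotent-frame observation together with \cite[Theorem 4.2]{MSC:co} packages the compatibility of these measures, the statement follows by substituting the identified surface measure into Federer's Euclidean coarea formula and observing that the null set of critical values contributes nothing on either side.
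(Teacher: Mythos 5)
The paper does not actually prove this statement---it is quoted as Theorem 4.2 of \cite{MSC:co}---so there is no internal argument to compare against, and supplying a proof at all already goes beyond what the authors do. Your reduction to Federer's coarea formula for Lipschitz maps is the right move and, on its own, establishes the identity as written: $\psi$ is smooth with bounded Euclidean gradient on the bounded domain $\Omega$, hence Lipschitz, and Federer gives exactly $\int_\Omega g\|\nabla\psi\|\,d\mathcal{L}_{2n+1}=\int_0^\infty\int_{\psi^{-1}\{r\}}g\,d\mathcal{H}\,dr$ with $\mathcal{H}$ the Euclidean $(2n)$-dimensional Hausdorff measure. That reading is consistent with how the formula is used later in the paper, where the boundary integrals on $\partial B_r$ are taken against the Euclidean surface measure with the Euclidean outward normal $\nu$. (The appeal to Sard is superfluous: the coarea formula needs no regularity of the level sets beyond what it supplies for almost every $r$, and the critical set contributes nothing to the left side since $\|\nabla\psi\|=0$ there.)

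Where your proposal goes astray is the ``measure-identification'' step that you single out as the main obstacle. First, the obstacle is illusory: the $\mathcal{H}$ in the statement must be the Euclidean Hausdorff $(2n)$-measure, since a Carnot--Carath\'eodory Hausdorff measure that is positive and locally finite on hypersurfaces of this space would live in dimension $Q-1=2n+2k-1$, not $2n$. Second, and more importantly, the argument you give for the identification is false as a matter of geometry: a unipotent change of frame has determinant one and therefore preserves the volume form on $\mathbb{R}^{2n+1}$, but it does \emph{not} preserve induced surface measures on hypersurfaces---a shear of determinant one can stretch a fixed hyperplane by an arbitrarily large factor, so $(2n)$-dimensional Hausdorff measures computed with respect to the two metrics genuinely disagree on level sets. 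If the two surface measures actually had to be reconciled, this step would be a real gap; as it stands it is an incorrect argument for a claim you do not need. Delete it, and your proof is simply Federer's theorem, which is sound and more self-contained than the paper's bare citation.
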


\begin{remark}
As above, the theorem also holds for continuous functions $\psi$ which are 
smooth everywhere except at isolated points.
\end{remark}

\noindent
We now suppose a particular case, where $x_0 \in G_n$ has coordinates
$x_0 = (a_1, a_2, \ldots, a_{2n}, s)$
and $\psi$ is a non-negative radial function with $\psi(x_0) = 0$.  The following 
notation is suggestive for the inverse images of $\psi$.
$$
\begin{array}{ccccc}
B_R(x_0) &=& \psi^{-1}([0,R)) &=& \{ x \in \Omega : \psi(x) < R \} \\
\partial B_R(x_0) &=& \psi^{-1}(\{R\})&=& \{ x \in \Omega : \psi(x) = R \}
\end{array}
$$
The $x_0$ is omitted when it is clear from the context.
Now choose $g(x) := \|\nabla_0 \psi\|^\tp$.  Since $\|\nabla_0\psi\| \lesssim 
\|\nabla \psi\|$ we may apply the Co-Area Formula to the function
$g = (g / \|\nabla \psi\|)\cdot \|\nabla \psi\|$ to obtain the following
proposition.

\begin{prop}
Let $\mathcal{V}$ be an absolutely continuous measure to $\mathcal{L}_{2n+1}$ with Radon-Nikodym
derivative $g = [d\mathcal{V} / d\mathcal{L}_{2n+1}]$.  Then for sufficiently small $R > 0$,
\begin{equation} \label{coarea2}
\mathcal{V}(B_R) \; = \; \int_{B_R} d\mathcal{V} \; = \; 
\int_0^R \int_{\partial B_r} \frac{g}{\|\nabla \psi\|} \, d\mathcal{H} dr
\end{equation}
\end{prop}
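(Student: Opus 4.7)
The plan is to reduce the statement to a direct application of the Co-Area Formula \eqref{coarea}, after massaging the integrand so that a factor of $\|\nabla\psi\|$ appears explicitly. First, absolute continuity of $\mathcal{V}$ with respect to $\mathcal{L}_{2n+1}$ gives, by the definition of the Radon--Nikodym derivative,
\[
\mathcal{V}(B_R) \;=\; \int_{B_R} d\mathcal{V} \;=\; \int_{B_R} g \, d\mathcal{L}_{2n+1}.
\]
This handles the first equality and reduces the problem to showing that the last quantity equals the iterated integral on the right.

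Next, since $\psi$ is a non-negative radial function vanishing only at $x_0$, one expects $\|\nabla \psi\|$ to be strictly positive on $B_R \setminus \{x_0\}$ for $R$ small enough, so the ratio $g/\|\nabla\psi\|$ is defined $\mathcal{L}_{2n+1}$-a.e.\ on $B_R$. I would write
\[
g \;=\; \frac{g}{\|\nabla\psi\|}\cdot \|\nabla\psi\|
\]
on $B_R$ and apply the Co-Area Formula \eqref{coarea} to the function $(g/\|\nabla\psi\|)\chi_{B_R}$ (which lies in $L^1$ by the same uniform lower bound on $\|\nabla\psi\|$ away from $x_0$ together with $g \in L^1_{\mathrm{loc}}$). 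This yields
\[
\int_{B_R} g \, d\mathcal{L}_{2n+1} \;=\; \int_0^\infty \int_{\psi^{-1}\{r\}\cap B_R} \frac{g}{\|\nabla\psi\|}\, d\mathcal{H}\, dr.
\]

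Finally, because $B_R = \{\psi < R\}$, the set $\psi^{-1}\{r\}\cap B_R$ is empty when $r \geq R$ and equals $\partial B_r$ when $0 < r < R$. Truncating the outer integral to $[0,R]$ and identifying the level sets with $\partial B_r$ produces the desired identity \eqref{coarea2}.

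The main obstacle is technical rather than conceptual: one must justify that $g/\|\nabla \psi\|$ is legitimately integrable so that \eqref{coarea} can be invoked, and that the isolated singularity of $\psi$ at $x_0$ does not cause trouble. The remark following the Co-Area Formula already states that $\psi$ may be smooth except at isolated points, and choosing $R$ small enough so that $\|\nabla\psi\|$ is bounded below on the annulus $B_R \setminus B_\varepsilon$ (for each $\varepsilon > 0$) together with a monotone convergence argument as $\varepsilon \to 0$ handles the singularity cleanly.
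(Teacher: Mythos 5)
Your proposal is correct and follows essentially the same route as the paper: the paper likewise obtains \eqref{coarea2} by writing $g = (g/\|\nabla\psi\|)\cdot\|\nabla\psi\|$ and applying the Co-Area Formula \eqref{coarea}, with the level sets $\psi^{-1}\{r\}$ identified with $\partial B_r$. Your added care about the integrability of $g/\|\nabla\psi\|$ and the isolated singularity at $x_0$ only fills in details the paper leaves implicit.
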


In light of the equality in \eqref{coarea2}, we see that the measure space 
$(G_n, \mathcal{V})$ is globally Ahlfors $Q$-regular with respect to balls centered
at $x_0$.  In particular, for $R > 0$,
\begin{equation} \label{ahlfors}
\mathcal{V}(B_R) \; = \; \sigma_\tp R^Q
\end{equation}
where $Q = 2n+2k$ and $\sigma_\tp = \mathcal{V}(B_1)$ is a fixed positive 
constant.

For technical purposes we proceed to study the boundary behavior of 
precompact domains $\Omega$.  This now motivates the following definition.

\begin{definition}
For small values $R \in R_\psi$, define a measure $\mathcal{S}$ on $\partial B_R$ as
$$
\mathcal{S}(\partial B_R) \; = \; \int_{\partial B_R} d\mathcal{S} \; = \;
\int_{\partial B_R} \frac{g}{\|\nabla \psi\|} \,d\mathcal{H}.
$$ 
\end{definition}
In particular, $\mathcal{S}$ is absolutely continuous with respect to the Hausdorff 
$(2n)$-measure $\mathcal{H}$.  Using previous results, we now conclude:

\begin{corollary}
\begin{enumerate}
\item $\mathcal{S}$ is locally Ahlfors $(Q-1)$-regular and 
\begin{equation} \label{ahlfors2}
\mathcal{S}(\partial B_R) \; = \; Q \sigma_1 R^{Q-1}.
\end{equation}
\item Let $\varphi$ be a continuous and integrable function on $B_R$.  Then
as $R \to 0$,
\begin{equation} \label{density}
\frac{R^{1-Q}}{Q\sigma_\tp} \int_{\partial B_R} \varphi \,d\mathcal{S} \; \to \;
\varphi(0)
\end{equation}
\end{enumerate}
\end{corollary}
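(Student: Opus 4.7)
The plan is to prove the two items in sequence, with (1) supplying the normalization used in (2). For part (1), I would apply the co-area formula \eqref{coarea2} to reinterpret $\mathcal{V}(B_R)$ as the radial distribution function of $\mathcal{S}$, so that
\[
  \mathcal{V}(B_R) \;=\; \int_0^R \mathcal{S}(\partial B_r)\,dr.
\]
The Ahlfors relation \eqref{ahlfors} gives $\mathcal{V}(B_R)=\sigma_\tp R^Q$, which is smooth in $R$. Differentiating both sides with respect to $R$ via the fundamental theorem of calculus yields $\mathcal{S}(\partial B_R)=Q\sigma_\tp R^{Q-1}$, which (after reconciling the $\sigma_\tp$ versus $\sigma_1$ normalization) is the content of \eqref{ahlfors2} and furnishes the local Ahlfors $(Q-1)$-regularity of $\mathcal{S}$ on the range of radii for which \eqref{ahlfors} holds.

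For part (2), I would invoke the continuity of $\varphi$ at $x_0$ together with the normalization just obtained. Since $\psi$ is a non-negative continuous radial function with $\psi(x_0)=0$, for any $\epsilon > 0$ one can choose $\delta > 0$ so that $|\varphi(x)-\varphi(x_0)|<\epsilon$ whenever $\psi(x)<\delta$. Then for $0 < R < \delta$, $\partial B_R \subset \{\psi < \delta\}$, and
\[
  \left|\frac{R^{1-Q}}{Q\sigma_\tp}\int_{\partial B_R}\varphi\,d\mathcal{S} - \varphi(x_0)\right|
  \;\le\; \frac{R^{1-Q}}{Q\sigma_\tp}\int_{\partial B_R}|\varphi-\varphi(x_0)|\,d\mathcal{S}
  \;\le\; \epsilon \cdot \frac{R^{1-Q}}{Q\sigma_\tp}\cdot Q\sigma_\tp R^{Q-1} \;=\; \epsilon,
\]
from which \eqref{density} follows upon letting $\epsilon \to 0$.

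The only subtle point lies in the differentiation step in part (1): strictly speaking, the fundamental theorem of calculus requires $r \mapsto \mathcal{S}(\partial B_r)$ to be sufficiently regular (continuous, or at least locally $L^1$ with an appeal to Lebesgue differentiation). This is guaranteed because the co-area formula expresses $\mathcal{V}(B_R)$ as the integral of an $L^1_{\text{loc}}$ radial density, and the explicit smooth form $\sigma_\tp R^Q$ then pins that density down almost everywhere to the continuous function claimed. Everything else reduces to bookkeeping with the definitions of $\mathcal{S}$ and $\mathcal{V}$ and does not present a genuine obstacle.
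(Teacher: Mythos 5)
Your proposal is correct and follows essentially the same route as the paper, which likewise obtains \eqref{ahlfors2} by differentiating \eqref{coarea2} together with \eqref{ahlfors} and then treats \eqref{density} as the Lebesgue Density Theorem analogue (your direct $\epsilon$--$\delta$ argument for continuous $\varphi$ is just the standard proof of that step written out). Your observation that the constant in \eqref{ahlfors2} should be $\sigma_\tp$ rather than $\sigma_1$ is also well taken, since $\mathcal{S}$ is built from the density $g=\|\nabla_0\psi\|^{\tp}$.
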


\begin{remark}
\eqref{ahlfors2} follows immediately from differentiating both \eqref{coarea2}
and \eqref{ahlfors}.  Since $\mathcal{S}$ is absolutely continuous with respect to 
Hausdorff $(2n)$-measure $\mathcal{H}$, it follows that $\mathcal{S}$ is Borel regular.  As a result,
\eqref{density} is the analogue of the Lebesgue Density Theorem. 
\end{remark}

\section{The $\tp$-Laplace Equation}
Now, we compute an explicit formula for the fundamental solution of the $\tp$-Laplacian for the vector fields defined by Equation \eqref{vectorfields} above and for $1 < \tp < \infty$. We define the constant $Q=2k+2n$. 

\begin{thm}\label{fund_sol}
Let $x_0 =(a_1, a_2, \ldots,  a_{2n},s)$ be an arbitrary fixed point.  Let $Q=2n+2k$.  Consider the following quantities for $1 < \tp < \infty$:
\begin{eqnarray*}
w & = & \frac{Q-\tp}{(1-\tp)(4k)}, \ \ \alpha =  \frac{Q-\tp}{(1-\tp)}, \\ 
h(x_1, \ldots,  x_{2n}, t)  & = &  c^2\bigg(\sum_{l=1}^{2n}(x_l - a_l)^2\bigg)^{2k} + (t-s)^2
\equiv c^2\Sigma^{2k}+ (t-s)^2, \\
\psi(x_1, \ldots,  x_{2n}, t)  & = &  [h(x_1, \ldots,  x_{2n}, t)]^{\frac{1}{4k}}, \\ 
f(x_1, \ldots, x_{2n}, t)  & = &  [h(x_1, \ldots,  x_{2n}, t)]^w = \psi(x_1, \ldots,  x_{2n}, t)^\alpha\\ 
\sigma_{\tp}  & = &  \int_{B_1} \|\no \psi \|^{\tp}\; d\mathcal{L}_{2n+1},\\ & &  \textnormal{where} \;d\mathcal{L}_{2n+1} \; \textnormal{denotes the Lebesgue $(2n+1)$-measure} \\
C_1  & = &  \alpha^{-1}(Q\sigma_{\tp})^{\frac{1}{1-\tp}}, \\ 
C_2  & = &  (Q\sigma_{\tp})^{\frac{1}{1-\tp}}.
\end{eqnarray*}
Then, for constants $C_1$ and $C_2$, we have
\begin{eqnarray}
\Delta_{\tp} C_1 f(x_1, ..., x_{2n}, t)= \delta_{x_0} \: \: when \: \: \tp \ne Q, \\
\Delta_{\tp} (C_2 \log \psi(x_1, ..., x_{2n}, t))= \delta_{x_0} \: \: when \: \: \tp = Q,
\end{eqnarray}
in the sense of distributions.
\end{thm}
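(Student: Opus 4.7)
I plan to split the argument into two stages: (i) verify that the candidate function is $\tp$-harmonic on $G_n\setminus\{x_0\}$ by direct computation, and (ii) identify the distributional delta at $x_0$ via a divergence-theorem argument that feeds into the density result \eqref{density}.

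Stage (i) reduces $\tp$-harmonicity of $f=\psi^\alpha$ (or of $\log\psi$) to a single identity for $\psi$. Applying the chain rule to $\psi = h^{1/(4k)}$, a short calculation gives expressions for $X_i\psi$ in which the cross terms between the Euclidean partial and the $\partial_t$-component pair up between indices $i\le n$ and $i+n$ with opposite signs. Summing $(X_i\psi)^2 + (X_{i+n}\psi)^2$ causes those cross terms to cancel and produces the clean formula
$$
\|\no\psi\|^2 \;=\; c^2\,\psi^{2-4k}\,\Sigma^{2k-1},\qquad \Sigma \;=\; \sum_{l=1}^{2n}(x_l-a_l)^2.
$$
A parallel but more involved calculation yields the crucial identity
$$
\Delta_\tp \psi \;=\; (Q-1)\,\frac{\|\no\psi\|^{\tp}}{\psi} \qquad\text{on } G_n\setminus\{x_0\}.
$$
Combined with the general chain-rule expansion $\Delta_\tp\psi^\alpha = \alpha|\alpha|^{\tp-2}\psi^{\beta-1}[\beta\|\no\psi\|^\tp + \psi\,\Delta_\tp\psi]$ with $\beta=(\alpha-1)(\tp-1)$, the specific value $\alpha=(Q-\tp)/(1-\tp)$ forces $\beta=-(Q-1)$ and hence $\Delta_\tp f = 0$ off $x_0$. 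The borderline case $\tp=Q$ is handled by using $\log\psi$ in place of $\psi^\alpha$, where the same identity produces $\tp$-harmonicity.

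For stage (ii), fix $\varphi\in\ci(\Omega)$. Each $X_i$ is Euclidean-divergence-free, so integration by parts on $\Omega\setminus B_\epsilon(x_0)$ yields only a boundary contribution on $\partial B_\epsilon$ (the interior integral vanishes by stage (i)). Using $\nabla_0(C_1 f)\cdot \nabla_0\psi = C_1\alpha\,\epsilon^{\alpha-1}\|\no\psi\|^2$ on $\partial B_\epsilon$ and outward unit normal $-\nabla\psi/\|\nabla\psi\|$ for the annulus, the boundary integrand collapses (via the definition of $\mathcal{S}$) to
$$
-\,(C_1\alpha)|C_1\alpha|^{\tp-2}\,\epsilon^{-(Q-1)}\!\int_{\partial B_\epsilon}\varphi\,d\mathcal{S}.
$$
Applying \eqref{density} sends this to $-(C_1\alpha)|C_1\alpha|^{\tp-2}\,Q\sigma_\tp\,\varphi(x_0)$ as $\epsilon\to 0$. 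The exponent arithmetic $(\alpha-1)(\tp-1) = -(Q-1)$ is precisely why $\alpha$ must be chosen as given, and the normalization $C_1=\alpha^{-1}(Q\sigma_\tp)^{1/(1-\tp)}$ sets the surviving scalar to $1$. Local integrability of $\|\no f\|^{\tp-1}$ near $x_0$ justifies sending the $B_\epsilon$-contribution to zero. The $\tp=Q$ case is parallel with $\Phi(\psi)=C_2\log\psi$, $\Phi'(\psi)=C_2/\psi$, and $C_2=(Q\sigma_\tp)^{1/(1-\tp)}$.

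The main obstacle is the identity $\Delta_\tp\psi = (Q-1)\|\no\psi\|^\tp/\psi$ in stage (i). The simplification of $\|\no\psi\|^2$ follows from one natural cancellation, but the full $\tp$-Laplacian requires applying each $X_i$ through all three factors $\Sigma$, $(t-s)$, and $\psi$ in $\|\no\psi\|^{\tp-2}X_i\psi$ simultaneously, including the $\partial_t$ contribution inside each $X_i$. One must carefully track competing powers of $\Sigma$ and $\psi$ and verify that everything reassembles into a single scalar multiple of $\|\no\psi\|^\tp/\psi$. Once that identity is in hand, the remainder of the proof is the familiar renormalization-at-a-pole scheme adapted to this sub-Riemannian setting.
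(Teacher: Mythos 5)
Your plan is correct, and its second stage is the paper's argument almost verbatim: both integrate by parts over the punctured domain using the fact that each $X_i$ is Euclidean-divergence-free (the paper packages this as the explicit vector $\xi$ fed into Stokes' Theorem), reduce everything to the boundary term $-|\alpha|^{\tp-2}\alpha\, r^{1-Q}\int_{\partial B_r}\phi\|\nabla_0\psi\|^{\tp}\|\nu\|^{-1}\,d\mathcal{H}$, and conclude via the density statement \eqref{density} and the normalization of $C_1$, $C_2$. Where you genuinely diverge is in the harmonicity stage. The paper never isolates your radial identity $\Delta_\tp\psi=(Q-1)\|\nabla_0\psi\|^{\tp}/\psi$; it instead differentiates $\|\nabla_0 f\|^{\tp-2}X_if$ for $f=h^{w}$ directly and shows $(\alpha|\alpha|^{\tp-2}|c|^{\tp})^{-1}\Delta_{\tp}f=(2n+2k+2\chi+4k\Upsilon)h^{\Upsilon}\Sigma^{\chi}$ with $\Upsilon=w(\tp-1)-\tp/2$ and $\chi=k\tp-\tp/2$, the coefficient vanishing precisely for the stated $w$. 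Your identity is the special case $w=1/(4k)$ of that formula (then the coefficient is $Q-1$ and $|c|^{\tp}h^{\Upsilon}\Sigma^{\chi}=\|\nabla_0\psi\|^{\tp}/\psi$), so it is true, and the computational cost of proving it is the same as the paper's. What your organization buys: the choice of $\alpha$ is explained by the transparent exponent identity $(\alpha-1)(\tp-1)=-(Q-1)$ rather than by a four-term cancellation; the case $\tp=Q$, which the paper leaves to the reader, follows from the same identity applied to $\log\psi$; and the identity also delivers at once the $\tp$-harmonicity of the profiles used for the capacity computation in Section 5. One caution: the step you label the ``main obstacle'' is indeed the bulk of the work, and your plan asserts the identity rather than deriving it; a complete write-up must carry out that differentiation, tracking the $\partial_t$ component inside each $X_i$, exactly as the paper does in its (lengthy) expansion.
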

\begin{proof}
Note that, for the sake of rigor, we should invoke the regularization of $h$ given by
$$h_\varepsilon(x_1, ..., x_{2n},t) = c^2\bigg(\sum_{l=1}^{2n}(x_l - a_l)^2 + \varepsilon^2\bigg)^{2k} + (t-s)^2$$
for $\varepsilon > 0$ and let $\varepsilon \to 0$.  Instead, we proceed formally.  We will need some calculations for the proof that we will compute first.  For $\tp \ne Q$, we have:
\begin{eqnarray*}
\textmd{\ \ when \ \ }i\leq n,\  X_i f & = & \alpha h^{w-1}c^2\Sigma^{2k-1}(x_i-a_i) + \alpha ch^{w-1} \Sigma^{k-1} (x_{i+n} - a_{i+n}) (t-s)  \\
\textmd{and when \ \ }j> n,\  X_j f & = & \alpha h^{w-1}c^2\Sigma^{2k-1}(x_j-a_j) - \alpha ch^{w-1} \Sigma^{k-1} (x_{j-n} - a_{j-n}) (t-s), \\
\textmd{so that\ \ } \|\no f \|^2 & = & \alpha^2c^2h^{2w-1}\Sigma^{2k-1}\\
\textmd{and\ \ } \|\no f \|^{\tp-2}  & = & |c\alpha|^{p-2}h^{(w-\frac{1}{2})(\tp-2)}\Sigma^{(k-\frac{1}{2})(\tp-2)}.\end{eqnarray*}
We then have for $i\leq n$, 
\begin{eqnarray*}
\|\no f \|^{\tp-2} X_i f   & = &  \alpha|\alpha|^{p-2}|c|^{\tp}h^{(w(\tp-1)-\frac{\tp}{2})}\Sigma^{(k\tp-\frac{\tp}{2})}(x_i-a_i)\\
& & \mbox{}+\alpha|\alpha|^{p-2}c|c|^{\tp-2}h^{(w(\tp-1)-\frac{\tp}{2})}\Sigma^{(k(\tp-1)-\frac{\tp}{2})}(x_{i+n}-a_{i+n})(t-s)
\end{eqnarray*}
and for $j>n$, we have 
\begin{eqnarray*}
\|\no f \|^{\tp-2} X_j f   & = &  \alpha|\alpha|^{p-2}|c|^{\tp}h^{(w(\tp-1)-\frac{\tp}{2})}\Sigma^{(k\tp-\frac{\tp}{2})}(x_j-a_j)\\
& & \mbox{}-\alpha|\alpha|^{p-2}c|c|^{\tp-2}h^{(w(\tp-1)-\frac{\tp}{2})}\Sigma^{(k(\tp-1)-\frac{\tp}{2})}(x_{j-n}-a_{j-n})(t-s).
\end{eqnarray*}

Letting $\Upsilon=w(\tp-1)-\frac{\tp}{2}$ and $\chi=k\tp-\frac{\tp}{2}$, we employ routine calculations to compute the $\tp$-Laplacian:
\begin{eqnarray*}
(\alpha|\alpha|^{p-2}|c|^{\tp})^{-1}\Delta_{\tp}f & = & 
 (\alpha|\alpha|^{p-2}|c|^{\tp})^{-1}\bigg(\sum_{i=1}^{n}X_i(\|\no f\|^{\tp -2} \no f) +  \sum_{j=n+1}^{2n}X_j(\|\no f\|^{\tp -2} \no f)\bigg)\\
  & = & (2n+2k+2\chi+ 4k\Upsilon) h^\Upsilon \Sigma^\chi \\
  & = & Q+2k\tp-\tp +(\tp-Q)-2k\tp = 0. 
\end{eqnarray*}

Note that these computations are valid wherever the function $f$ is 
smooth and in particular, these are valid away from the point $x_0$. 
We note that by our computations above,  $\|\nabla_0f\|^{\tp-1}$ is locally integrable on $G_n$.
We then consider $\phi \in C^{\infty}_0$ with compact support in the ball 
$$
B_R = \{y: \psi(y) < R \}.
$$   
Let $0 < r < R$ be given so that $B_r \subset B_R$.  In the annulus 
$\mathcal{A} := B_R \setminus \overline{B_r}$ we have, via the Leibniz rule,
\begin{eqnarray*}
\div(\phi \|\nabla_0f\|^{\tp-2}\nabla_0f) & = & 
\phi \div(\|\nabla_0f\|^{\tp-2}\nabla_0f) +
\|\nabla_0f\|^{\tp-2}\ip{\nabla_0f}{\nabla_0\phi} \\ & = & 
0 + \|\nabla_0f\|^{\tp-2}\ip{\nabla_0f}{\nabla_0\phi}.
\end{eqnarray*}  
Let $\mathcal{L}_{2n+1}$ and $\mathcal{H}$ be the measures from \eqref{coarea}. Applying Stokes' Theorem,
\begin{eqnarray*}
\lefteqn{ \int_{\mathcal{A}}\|\nabla_0f\|^{\tp-2} \ip{\nabla_0f}{\nabla_0\phi}d\mathcal{L}_{2n+1} = 
\int_{\mathcal{A}}\div(\phi \|\nabla_0f\|^{\tp-2}\nabla_0f)d\mathcal{L}_{2n+1}} \\
 &=& \int_{\mathcal{A}} \sum_{l=1}^{2n} X_l[\phi \|\nabla_0f\|^{\tp-2}X_lf] d\mathcal{L}_n \\ 
 &=& \int_{\mathcal{A}} \sum_{l=1}^{2n} \frac{\partial}{\partial x_l}[\phi \|\nabla_0f\|^{\tp-2}X_lf]
+ \sum_{i=1}^n 2kc(x_{i+n} -a_{i+n})\Sigma^{k-1}\frac{\partial}{\partial t}[ \phi \|\nabla_0f\|^{\tp-2}X_if] \\
& & \mbox{}- \sum_{j=n+1}^{2n} 2kc(x_{j-n} -a_{j-n})\Sigma^{k-1}\frac{\partial}{\partial t}[ \phi \|\nabla_0f\|^{\tp-2}X_jf]d\mathcal{L}_n \\ &=&
\int_{\mathcal{A}} \div_{\eucl}
[\xi] d\mathcal{L}_n 
\end{eqnarray*}
where the $(2n+1)$-vector $\xi$ is defined by 
\begin{eqnarray*}
\xi=\left[\begin{array}{c}\phi \|\nabla_0f\|^{\tp-2}X_1f \\
\phi \|\nabla_0f\|^{\tp-2}X_2f \\
\ldots \\ 
\phi \|\nabla_0f\|^{\tp-2}X_{2n}f\\
2kc\Sigma^{k-1} \phi \|\nabla_0f\|^{\tp-2}\bigg(\D\sum_{i=1}^n (x_{i+n} -a_{i+n})X_if- \D\sum_{j=n+1}^{2n} (x_{j-n} -a_{j-n}) X_jf\bigg)
\end{array}\right]
\end{eqnarray*}

Thus, 
\begin{eqnarray*}
\lefteqn{ \int_{\mathcal{A}}\|\nabla_0f\|^{\tp-2} \ip{\nabla_0f}{\nabla_0\phi}d\mathcal{L}_{2n+1}  =  
\int_{\partial\mathcal{A}} \frac{1}{\|\nu\|} \sum_{l=1}^{2n} \phi \|\nabla_0f\|^{\tp-2}X_lf \nu_l
+  \xi_{2n+1}\nu_{2n+1}  d\mathcal{H}} \\ 
& = & = -\int_{\partial B_r} \frac{1}{\|\nu\|}  \sum_{l=1}^{2n} \phi \|\nabla_0f\|^{\tp-2}X_lf \nu_l
+  \xi_{2n+1}\nu_{2n+1}  d\mathcal{H} 
\end{eqnarray*}
where $\nu$ is the outward Euclidean normal.  Recalling that
$$ \psi(x_1, x_2, \ldots, x_{2n}, t) = [h(x_1, x_2, \ldots, x_{2n}, t)]^\frac{1}{4k},$$
we proceed with the computation,
\begin{eqnarray*}
\lefteqn{ \int_{\mathcal{A}}\|\nabla_0f\|^{\tp-2} \ip{\nabla_0f}{\nabla_0\phi}d\mathcal{L}_n } \\ 
& = & -\int_{\partial B_r} \frac{1}{\|\nu\|}\bigg(\sum_{l=1}^{2n} \phi \|\nabla_0f\|^{\tp-2}X_lf \nu_l
+  \xi_{2n+1}\nu_{2n+1}\bigg)  d\mathcal{H} \\ 
& = & -\int_{\partial B_r} \frac{\alpha \psi^{\alpha-1}}{\|\nu\|} \phi \|\nabla_0\psi^\alpha\|^{\tp-2}
\Bigg(\sum_{l=1}^{2n}X_l\psi\frac{\partial\psi}{\partial x_l}+2kc\Sigma^{k-1} \\
& & \times\bigg(\D\sum_{i=1}^n (x_{i+n} -a_{i+n})X_i\psi- \D\sum_{j=n+1}^{2n} (x_{j-n} -a_{j-n}) X_j\psi\bigg)
\frac{\partial\psi}{\partial t}\Bigg) d\mathcal{H} \\
 & = & -\int_{\partial B_r} \frac{\alpha \psi^{\alpha-1}}{\|\nu\|} \phi 
\|\nabla_0\psi\|^{\tp-2}|\alpha|^{\tp-2} \psi^{(\tp-2)(\alpha-1)}
\bigg( \| \nabla_0\psi\|^2 \bigg) d\mathcal{H} \\ & = &
-\int_{\partial B_r} \frac{|\alpha|^{\tp-2}\alpha \psi^{(\tp-1)(\alpha-1)}}{\|\nu\|} \phi 
\|\nabla_0\psi\|^\tp d\mathcal{H}.
\end{eqnarray*}
Recall that by definition, $\psi \equiv r$ on $\partial B_r$.  We then have
\begin{eqnarray*}
\int_{\mathcal{A}}\|\nabla_0f\|^{\tp-2} \ip{\nabla_0f}{\nabla_0\phi} d\mathcal{L}_n &=&
- |\alpha|^{\tp-2} \alpha r^{1-Q} 
\int_{\partial B_r} \frac{\phi \|\nabla_0\psi\|^\tp}{\|\nu\|} \, d\mathcal{H}. 
\end{eqnarray*} 
Letting $r \to 0$, we apply \eqref{density} and obtain
\begin{equation} \label{scaling}
\int_{\mathcal{A}}\|\nabla_0f\|^{\tp-2} \ip{\nabla_0f}{\nabla_0\phi} d\mathcal{L}_{2n+1} \; \to \;
- |\alpha|^{\tp-2} \alpha (Q\sigma_\tp) \phi(x_0).
\end{equation}
We then obtain the case for $\tp \neq Q$.  
The case of $\tp=Q$ is similar and left to the reader.
\end{proof}

We have the following corollary.   
\begin{corollary}
The function $\psi$, as defined above, is infinite harmonic in the space 
$G_n \setminus \{ x_0 \}$.
\end{corollary}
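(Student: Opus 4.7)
The plan is to exploit the one-parameter family of $\tp$-harmonic functions supplied by Theorem \ref{fund_sol}: for each $\tp \in (1,\infty)\setminus\{Q\}$, the function $\psi^{\alpha(\tp)}$ with $\alpha(\tp) = (Q-\tp)/(1-\tp)$ is $\tp$-harmonic on $G_n\setminus\{x_0\}$. Since the map $\tp \mapsto \alpha(\tp)$ is continuous and non-constant, I will algebraically differentiate this family in $\tp$ to peel off $\Delta_\infty \psi$.

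The first step is the standard decomposition of $\Delta_{\tp}$ on the smooth locus,
$$\Delta_{\tp} u \;=\; \|\no u\|^{\tp-2}\,\Delta_0 u \;+\; (\tp-2)\,\|\no u\|^{\tp-4}\,\Delta_\infty u,$$
where $\Delta_0 u = \sum_{i=1}^{2n} X_iX_i u$; this follows immediately from the Leibniz rule and the identity $\Delta_\infty u = \frac{1}{2}\no u \cdot \no \|\no u\|^2$. The second step is to apply the chain rule to expand $\no(\psi^\alpha)$, $\|\no(\psi^\alpha)\|^2$, $\Delta_0(\psi^\alpha)$, and in particular $\Delta_\infty(\psi^\alpha) = \alpha^3(\alpha-1)\psi^{3\alpha-4}\|\no\psi\|^4 + \alpha^3\psi^{3\alpha-3}\Delta_\infty\psi$, in terms of derivatives of $\psi$. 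After substituting $u=\psi^\alpha$ into the decomposition, dividing through by the nonvanishing factor $\alpha^3\psi^{3\alpha-3}\|\no\psi\|^{\tp-4}$, and using the crucial algebraic identity $(\alpha-1)(\tp-1) = -(Q-1)$ (which is precisely where the special choice of $\alpha$ pays off), the equation $\Delta_{\tp}(\psi^\alpha)=0$ collapses to
$$\|\no\psi\|^2\,\Delta_0\psi \;-\; (Q-1)\psi^{-1}\|\no\psi\|^4 \;+\; (\tp-2)\,\Delta_\infty\psi \;=\; 0.$$

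The final step is the key observation: this identity holds pointwise on $G_n\setminus\{x_0\}$ for every admissible $\tp$, but every term except $(\tp-2)\Delta_\infty\psi$ is $\tp$-independent. Thus the left-hand side is an affine function of $\tp$ that vanishes on an open interval, which forces both its constant part and its slope to vanish identically. The slope is exactly $\Delta_\infty\psi$, yielding $\Delta_\infty\psi \equiv 0$ on $G_n\setminus\{x_0\}$, which is the claim. I expect the only nontrivial work to be the bookkeeping in the chain-rule expansions of $\Delta_0(\psi^\alpha)$ and $\Delta_\infty(\psi^\alpha)$; everything else is conceptual, and the heart of the argument is the realization that having a one-parameter family of $\tp$-harmonic powers of a single radial function forces that function to be $\infty$-harmonic.
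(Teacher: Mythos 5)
Your proof is correct but takes a genuinely different route from the paper. The paper argues by direct computation: it evaluates $\|\no\psi\|^2 = c^2\Sigma^{2k-1}h^{(1-2k)/(2k)}$, then each $X_i\|\no\psi\|^2$, and contracts with $\no\psi$ so that $2\Delta_\infty\psi$ appears as an explicit multiple of $h-c^2\Sigma^{2k}-(t-s)^2\equiv 0$. You instead treat $\tp$ as a parameter: the decomposition $\Delta_\tp u=\|\no u\|^{\tp-2}\Delta_0 u+(\tp-2)\|\no u\|^{\tp-4}\Delta_\infty u$, the chain-rule identities for $\psi^\alpha$, and the relation $(\alpha-1)(\tp-1)=-(Q-1)$ reduce $\Delta_\tp(\psi^{\alpha(\tp)})=0$ to an identity that is affine in $\tp$ with slope $\Delta_\infty\psi$, and vanishing for two distinct admissible values of $\tp$ forces $\Delta_\infty\psi=0$. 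I checked the decomposition, the expansion of $\Delta_\infty(\psi^\alpha)$, and the displayed identity; they are all correct. (One bookkeeping slip: the common factor you actually strip off is $|\alpha|^{\tp-2}\alpha\,\psi^{(\alpha-1)(\tp-1)}\|\no\psi\|^{\tp-4}$, not $\alpha^3\psi^{3\alpha-3}\|\no\psi\|^{\tp-4}$; this does not affect the conclusion.) What your approach buys: it recycles Theorem \ref{fund_sol} rather than recomputing second derivatives of $\psi$, and it explains \emph{why} $\psi$ must be $\infty$-harmonic --- a one-parameter family of $\tp$-harmonic powers of a single gauge leaves no other possibility. What the paper's computation buys: it applies verbatim on the degenerate set $\Sigma=0$, $t\neq s$, where $\no\psi=0$ and your division by $\|\no\psi\|^{\tp-4}$ is not legitimate; to match the stated conclusion on all of $G_n\setminus\{x_0\}$ you should add a one-line remark handling that set (by continuity, or by noting directly that $\Delta_\infty\psi$ carries a factor of $\|\no\psi\|^2$ there). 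Both arguments are formal in the same sense as the paper's own proof of Theorem \ref{fund_sol}.
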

\begin{proof}
We use the formula that for a smooth function $u$,
$$
2\Delta_{\infty}u=\nabla_0u \cdot \nabla_0\|\nabla_0u\|^2.
$$ 
Computing as in the Theorem, we have
$$
\|\nabla_0\psi\|^2 = c^2 \Sigma^{2k-1}h^{\frac{1-2k}{2k}}.
$$
Thus we obtain for $i=1$ to $n$,
\begin{equation*}
 X_i\|\nabla_0\psi\|^2 = 2c^2\Sigma^{2k-2}h^{\frac{1-4k}{2k}}
(2k-1)\bigg(h(x_i-a_i)-c^2(x_i-a_i)\Sigma^{2k}-c\Sigma^{k}(x_{i+n}-a_{i+n})(t-s)\bigg)
\end{equation*}
and for $j=n+1$ to $2n$,
\begin{equation*}
 X_j\|\nabla_0\psi\|^2  =  2c^2\Sigma^{2k-2}h^{\frac{1-4k}{2k}}
(2k-1)\bigg(h(x_j-a_j)-c^2(x_j-a_j)\Sigma^{2k}+c\Sigma^{k}(x_{j-n}-a_{j-n})(t-s)\bigg)
\end{equation*}
Calculations then give us
\begin{eqnarray*}
\Delta_{\infty}\psi & = & c^2\Sigma^{2k-2}h^{\frac{1-4k}{2k}}
(2k-1) \bigg( c^2h\Sigma^{k+1}-c^4\Sigma^{3k+1}-c^2\Sigma^{k+1}(t-s)^2\bigg) \\
 & = & 0.
\end{eqnarray*} 
The corollary then follows.
\end{proof}

\section{Spherical Capacity}

In this section, we will use previous results to compute the capacity of spherical rings centered at the point $x_0=(a_1, a_2, \ldots, a_{2n}, s)$.
We first recall the definition of $\tp$-capacity.
\begin{definition}
Let $\Omega \subset G_n$ be a bounded, open set, and $K \subset \Omega$ 
a compact subset. For $1 \leq \tp < \infty$ we define the $P$-capacity as
$$
\cap_\tp(K, \Omega) = 
\inf \left\{ \int_\Omega \|\nabla_0u\|^\tp : 
u \in C^\infty_0(\Omega), \; u|_K = 1 \right\}.
$$
\end{definition}

We note that although the definition is valid for $\tp=1$, we will consider only $1 < \tp < \infty$, as in the previous sections. Because $\tp$-harmonic functions are minimizers to the energy integral
$$
\int_{G_n} \|\nabla_0f\|^\tp
$$
it is natural to consider $\tp$-harmonic functions when computing the capacity.  In particular, an easy calculation similar to the previous section shows
$$
u(x) = \left\{\begin{array}{lcc}
\frac{\D\psi(x)^\alpha - \D R^\alpha}{\D r^\alpha - \D R^\alpha} & 
\textrm{when} & 
\tp \neq Q \\
\\
\frac{\D\log{\psi(x)} - \D\log{R}}{\D\log{r} - \D\log{R}} & 
\textrm{when} & 
\tp = Q
\end{array}\right.
$$
is a smooth solution to the Dirichlet problem
$$
\left\{\begin{array}{ccl}
\Delta_\tp u = 0 & \textrm{in} & B(x_0, R) \setminus B(x_0, r) \\
u = 1 & \textrm{on} & \partial B(x_0, r) \\
u = 0 & \textrm{on} & \partial B(x_0, R)
\end{array}\right.
$$
for $1< \tp < \infty$. 

We state the following theorem, which follows from the computations of 
the previous section.
\begin{thm}
Let $0 < r < R$ and $1 < \tp < \infty$.  Then we have
$$
\cap_\tp\big(B(x_0,r), B(x_0, R)\big) = \left\{\begin{array}{lcc}
\alpha^{\tp-1} Q \sigma_\tp \big( r^\alpha - R^\alpha \big)^{1-\tp} & 
\textrm{when} & 1 < \tp < Q \\
\\
Q\sigma_Q [\log{R} - \log{r}]^{1-Q} & \textrm{when} & 
\tp = Q \\
\\
\alpha^{\tp-1} Q \sigma_\tp \big( R^\alpha - r^\alpha \big)^{1-\tp} & 
\textrm{when} & \tp > Q.
\end{array}\right.
$$
\end{thm}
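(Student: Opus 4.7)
The plan is to reduce the capacity computation to an evaluation of the $\tp$-energy of the explicit function $u$ displayed just above the theorem. Since $u$ satisfies the Dirichlet problem for $\Delta_{\tp}$ on the annulus $\mathcal{A}=B(x_0,R)\setminus \overline{B(x_0,r)}$ with boundary data $1$ and $0$, the standard Dirichlet principle for the $\tp$-Laplacian (valid because $t\mapsto t^\tp$ is strictly convex and the operator has the usual weak formulation in $\wq$) says that $u$ is the unique minimizer of $\int_\mathcal{A}\|\no v\|^\tp\,d\mathcal{L}_{2n+1}$ among competitors with the same boundary values. After a routine density/cut-off argument to pass between the $C^\infty_0$-definition and the $\wqo$-completion of the admissible class, this gives
\begin{equation*}
\cap_\tp\bigl(B(x_0,r),B(x_0,R)\bigr)=\int_{\mathcal{A}}\|\no u\|^\tp\,d\mathcal{L}_{2n+1}.
\end{equation*}

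Next, I would exploit that $u$ is a function of $\psi$ alone: writing $u=U(\psi)$, the chain rule yields $\no u=U'(\psi)\,\no\psi$ and hence $\|\no u\|^\tp=|U'(\psi)|^\tp\|\no\psi\|^\tp$. For $\tp\neq Q$, $U(\rho)=(\rho^\alpha-R^\alpha)/(r^\alpha-R^\alpha)$ gives $U'(\rho)=\alpha\rho^{\alpha-1}/(r^\alpha-R^\alpha)$; for $\tp=Q$, $U(\rho)=(\log\rho-\log R)/(\log r-\log R)$ gives $U'(\rho)=1/[\rho(\log r-\log R)]$. Applying the Co-Area Formula \eqref{coarea} with $g=\|\no\psi\|^\tp$ and the measure $\mathcal{S}$ from the previous section, the energy integral splits as
\begin{equation*}
\int_{\mathcal{A}}\|\no u\|^\tp\,d\mathcal{L}_{2n+1}=\int_r^R |U'(\rho)|^\tp\left(\int_{\partial B_\rho}\frac{\|\no\psi\|^\tp}{\|\nabla\psi\|}\,d\mathcal{H}\right)d\rho=\int_r^R|U'(\rho)|^\tp\,\mathcal{S}(\partial B_\rho)\,d\rho.
\end{equation*}

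The third step plugs in the Ahlfors identity \eqref{ahlfors2}, namely $\mathcal{S}(\partial B_\rho)=Q\sigma_\tp\rho^{Q-1}$, and computes a one-dimensional integral. For $\tp\neq Q$ the exponent collapses nicely since $(\alpha-1)\tp+(Q-1)=\alpha-1$ (a direct consequence of $\alpha=(Q-\tp)/(1-\tp)$), so
\begin{equation*}
\int_r^R \rho^{(\alpha-1)\tp+Q-1}\,d\rho=\int_r^R\rho^{\alpha-1}\,d\rho=\frac{R^\alpha-r^\alpha}{\alpha},
\end{equation*}
and combining with the constant $|\alpha|^\tp/|r^\alpha-R^\alpha|^\tp$ out front produces the advertised formula after tracking the sign of $\alpha$ (negative for $\tp<Q$, positive for $\tp>Q$). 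For $\tp=Q$ one instead gets $\int_r^R\rho^{-1}\,d\rho=\log(R/r)$, and the constants collapse to $Q\sigma_Q(\log R-\log r)^{1-Q}$.

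The step I expect to be the most delicate is the first one, identifying the capacity with the $\tp$-energy of $u$: one must justify the minimizer characterization and then approximate $u$, which is not compactly supported in $\Omega$, by elements of $C^\infty_0(\Omega)$ while preserving the trace $1$ on $\partial B(x_0,r)$; this is a standard but somewhat tedious cut-off and mollification argument in $\wq(\Omega)$. The remaining bookkeeping (sign of $\alpha$, case $\tp=Q$) is mechanical once the exponent identity $(\alpha-1)\tp+(Q-1)=\alpha-1$ is noted.
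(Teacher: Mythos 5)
Your proposal is correct and follows exactly the route the paper intends: the paper offers no written proof, asserting only that the theorem ``follows from the computations of the previous section,'' and those computations are precisely the ones you invoke --- the Dirichlet principle identifying the capacity with the $\tp$-energy of the explicit radial $\tp$-harmonic function $u$, the Co-Area Formula \eqref{coarea} to reduce that energy to a one-dimensional integral against $\mathcal{S}(\partial B_\rho)=Q\sigma_\tp\rho^{Q-1}$ from \eqref{ahlfors2}, and the exponent identity $(\alpha-1)\tp+Q-1=\alpha-1$. Your sign bookkeeping for $\alpha$ in the three cases is also consistent with the stated formulas (reading $\alpha^{\tp-1}$ as $|\alpha|^{\tp-1}$ when $\tp<Q$), so the only work left is the standard cut-off/density argument you already flag.
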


\end{document}